\newtheorem{thm}{Theorem}
\newtheorem{lm}{Lemma}
\newtheorem{Proposition}{Proposition}
\newtheorem{remark}{Remark}
\title{SINGULAR VECTOR PERTURBATION UNDER GAUSSIAN NOISE}
\author{Rongrong Wang}       
\begin{document}

\maketitle

\begin{abstract}
We perform a non-asymptotic analysis on the singular vector distribution under Gaussian noise. In particular, we provide sufficient conditions on a matrix for its first few singular vectors to have near normal distribution. Our result can be used to facilitate the error analysis in linear dimension reduction.
\end{abstract}

\pagestyle{myheadings}
\thispagestyle{plain}
\markboth{TEX PRODUCTION AND V. A. U. THORS}{SIAM MACRO EXAMPLES}

\section{INTRODUCTION}
Singular value decomposition (SVD) is one of the most important linear Dimension Reduction (DR) techniques, whereas its stability under many important types of noises is not fully studied. In particular,  suppose the random noise has a Gaussian distribution. We are interested in how it affects the low dimension embedding of the data and conversely, how does the dimension reduction change the shape of this noise.

Stability analysis on singular vector and eigenvector subspaces under general perturbations are performed in \cite{Chan,Dopico,Weyl}. In these papers, uniform upper bounds on the rotations of the singular or eigen subspaces are established. Despite the tightness of these results, there is still much room for improvements if we are restricted to special types of noise.

In practice, additive white Gaussian is a common assumption on the perturbation. For analytical convenience, we would like Gaussian distribution of error to be preserved during various dimension reduction processes, but it is not always true.  In SVD, for example,  the lower dimensional data  are the first few singular vectors of data matrices. Then the fact that norms of singular vectors are bounded by one prevents them from being Gaussian. 

As is pointed out in \cite{Bura} in an asymptotic way, that given any low rank data matrix, as the perturbation converges to zero, a properly scaled version of its singular vectors corresponding to the nonzero singular values will converge in distribution to a multivariate normal matrix. This result is closely related and can actually be derived from the well established singular subspace perturbation theory, where expressions of first order (\cite{Li}) and second order (\cite{Va}) approximations to the singular subspaces are explicitly known. 

In this paper, we go further in this direction and provide a non-asymptotic bound on the rate of the convergence. In applications, our result could be used in two ways. For fixed data matrices and noise levels, it can be used to predict whether or not one can safely assume that the dimension reduction does not change the Gaussian shape of the noise. In addition, for a given noise level, it shows how many samples are needed for the Gaussian assumption to be credible.

We now state the mathematical description of our problem. \\  \\
\textbf{Problem}:  \textit{Suppose $Y$ is an $n\times n$ real-valued data matrix, and $\widetilde{Y}=Y+\epsilon W$ is its noisy version, where the entries of $W$ are i.i.d. $N(0,\frac{1}{n})$ and $\epsilon>0$ is some small constant. Suppose for a given $k<n$, $Y$ and $\widetilde{Y}$ have the following conformally partitioned SVDs 
\begin{equation}\label{eq:Y}
 Y=(U_{1},U_{2})\left(\begin{array}{cc}
\Sigma_{1} & 0 \\
0 & 0 \\
\end{array} \right)(V_{1},V_{2})^T,
\end{equation}
\begin{equation}\label{eq:tilde}
\widetilde{Y}=(\widetilde{U}_{1},\widetilde{U}_{2})\left(\begin{array}{cc}
\widetilde{\Sigma}_{1} & 0 \\
0 & \widetilde{\Sigma}_{2} \\
\end{array} \right)(\widetilde{V}_{1},\widetilde{V}_{2})^T,
\end{equation}
where $\Sigma_1$ and $\widetilde{\Sigma}_1$ are $k\times k$ matrices, and all singular values are in descending order. We seek the answer to the following question. For a fixed data matrix $Y$, what values of $\epsilon$ can guarantee the existence of a random unitary matrix (rotation) $M$, such that the random quantity $U_1-\widetilde{U}_1M$ (or $V_1-\widetilde{V}_1M$ if we want to study the right singular vectors) has a distribution that is very similar to Gaussian?}

In other words, we want to know for which $\epsilon$, the perturbed $\widetilde{U}_1$ is approximately a rotated version of the original $U_1$ plus a Gaussian matrix. Rotations of singular vectors can occur when $Y$ has two singular values that are very close to each other, as illustrated in the following example from \cite{Vu}. Let
\[
Y=\left(\begin{matrix}1& 0 &0 \\ 0 &1 &0\\ 0 & 0& \frac{1}{2}\end{matrix}\right), \ \ \ \widetilde{Y}=\left(\begin{matrix}1&\epsilon &0\\ \epsilon&1&0\\0&0&\frac{1}{2}\end{matrix}\right).
\]
Then for all $0<\epsilon< 0.5$,
\[U_1=\left(\begin{matrix}1& 0 \\ 0 &1 \\ 0 &0\end{matrix}\right),\]
is one set of eigenvectors of the two dimensional invariant eigensubspace of $Y$, and 
\[ \widetilde{U}_1=\left(\begin{matrix}1/\sqrt{2}& 1/\sqrt{2} \\ 1/\sqrt{2} &-1/\sqrt{2}\\0&0 \end{matrix}\right).\]
are eigenvectors corresponding to the largest two eigenvalues of $\widetilde{Y}$. 
Therefore, as $\epsilon\rightarrow 0$,  $\widetilde{U}_1$ approaches $U_1M$,
\[ M=\left(\begin{matrix}1/\sqrt{2}& 1/\sqrt{2} \\ 1/\sqrt{2} &-1/\sqrt{2} \end{matrix}\right),
\]
instead of $U_1$.

Let us briefly introduce the history of the singular subspace perturbation theory, on which our analysis will be based.

The first important result about stability of Hermitian eigenvector problems was given in 1970 by Davis and Kahan \cite{Chan}. They proved the sin$\Theta$ theorem, which characterizes the rotation of eigensubspaces of hermitian matrices caused by deterministic perturbations.
In particular, they showed that an eigen-subspace is stable as long as its corresponding eigenvalues and the rest of the spectrum have a large enough gap. Wedin \cite{Wedin} extended this result to singular vector subspaces of non-hermitian matrices. Dopico \cite{Dopico} showed that the rotations of the left and right singular vectors caused by one perturbation should be similar to each other. Van Vu \cite{Vu} considered random perturbations (i.i.d. Bernoulli distribution) and derived an upper bound on the angles of the singular-subspace rotations that is smaller than those in \cite{Wedin,Dopico} but only holds with large probability. We refer the readers to \cite{stewart} for more details of the general stability analysis of SVD.

This note is organized as follows. In Section 2, we introduce notations and some existing results that will be used. In Section 3, we state several lemmas followed by the main theorem. Section 4 contains the proof of the main theorem. In Section 5, we apply our result to an audio signal classification problem, which is also the motivation of this note.

\section{PRELIMINARY RESULTS}
The main result of this note and its proof are both stated for square data matrices, merely for simplicity. All the results could be easily generalized to rectangular data matrices by first padding zeros to form square matrices and then go through the same procedure. Therefore, unless otherwise stated, we assume $Y$ is square with dimensionality $n\geq 2$ and rank $k<n$. The variables $Y$, $\widetilde{Y}$, $\Sigma_1$, $\widetilde{\Sigma}_i$, $U_i$, $\widetilde{U}_i$, $V_i$, $\widetilde{V}_i$, $i=1,2$, remain the same as defined in \eqref{eq:Y} and \eqref{eq:tilde}. 

For a given matrix $A$, we use ($A^H$) $A^T$ to denote its (conjugate) transpose, respectively, and $Q_A$ and $R_A$ to denote some (thin) QR decomposition of $A$. The set of singular values of $A$ is denoted by $\sigma(A)$. The minimum and maximum singular values of $A$ are written as $\sigma_{min}(A)$ and $\sigma_{max}(A)$.  We use $W$ to denote the normalized Gaussian matrix whose entries are $N(0,1/n)$. For any matrix $A$, $\|A\|_F$ denotes its Frobenius norm, $\|A\|_2$ its spectral norm, and $\|A\|_{\max}$ its element-wise maximum, i.e $\|A\|_{\max}=\max\limits_{i,j}|A_{i,j}|$.

The following theorem is from (Theorem 2.1, \cite{Dopico}) which characterizes the stability of singular vector subspaces under deterministic noise.
\begin{thm}(\cite{Dopico})\label{thm:sta} Let $\widetilde{Y}$ and $\widehat{Y}$ has conformally partitioned SVDs \eqref{eq:tilde} and \eqref{eq:hat}
\begin{equation}\label{eq:hat}
\widehat{Y}=(\widehat{U}_{1},\widehat{U}_{2})\left(\begin{array}{cc}
\widehat{\Sigma}_{1} & 0 \\
0 & \widehat{\Sigma}_{2} \\
\end{array} \right)(\widehat{V}_{1},\widehat{V}_{2})^T,
\end{equation}
 Let
\begin{equation}\label{eq:delta}\delta=\min\left\{\left(\min\limits_{\widetilde{\mu}\in \sigma(\widetilde{\Sigma}_2), \mu\in \sigma(\widehat{\Sigma}_1)}|\widetilde{\mu}-\mu|\right), \sigma_{\min}(\widetilde{\Sigma}_1)+\sigma_{\min}(\widehat{\Sigma}_1)\right\}.
\end{equation}
If $\delta>0$, then
\begin{equation}\label{eq:sta}
\min\limits_{L \ unitary}\sqrt{\|\widehat{U}_{1}L-\widetilde{U}_{1}\|^{2}_{F}+\|\widehat{V}_{1}L-\widetilde{V}_{1}\|_{F}^{2}} \leq \sqrt 2\frac{\sqrt{\|R\|_{F}^{2}+\|S\|^{2}_{F}}}{\delta},
\end{equation}
where $ R=(\widehat{Y}-\widetilde{Y})\widetilde{V}_{1}$, $ S=(\widehat{Y}-\widetilde{Y})^T\widetilde{U}_{1}$. Moreover, the left hand side of \eqref{eq:sta}  is minimized for $L=Z_1Z_2^{T}$ where $Z_1SZ_2^{T}$ is any SVD of $\widehat{U}_{1}^{T}\widetilde{U}_{1}+\widehat{V}_{1}^{T}\widetilde{V}_{1}$, and the equality can be attained.
\end{thm}

In the proof of the main theorem (Theorem \ref{thm:main}), we will frequently encounter the spectral norm of Gaussian matrices, which is bounded in the following theorem.
\begin{thm}(\cite{Szarek})\label{thm:condition}
Let W be an $n\times k$ ($k\leq n$) matrix with i.i.d. normal entries with mean 0 and variance $1/n$. Then, its largest and smallest singular values obey:
\[ P(\sigma_{1}(W)>1+\sqrt{k/n}+r)\leq e^{-nr^{2}/2},
\]
\[ P(\sigma_{k}(W)<1-\sqrt{k/n}-r)\leq e^{-nr^{2}/2},
\]
for any $r>0$.
\end{thm} 

The estimates in Theorem \ref{thm:condition} uses the joint density of singular value distribution of Gaussian matrix, whose explicit expression involves hypergeometric functions of matrix argument (see e.g., \cite{Muirhead} for details).

When applying Theorem \ref{thm:sta}, we need to use the following result of Weyl \cite{Weyl} to estimate the $\delta$ in \eqref{eq:sta}.
\begin{thm}\label{thm:eigs}
Suppose $A$ and $\widetilde{A}$ are both $n\times n$ Hermitian and that their eigenvalues are ordered decreasingly. Then
\[
\max\limits_{i=1,..,n}\{\lambda_i-\widetilde{\lambda}_i\} \leq \|A-\widetilde{A}\|_2.
\]
\end{thm}
Last we need a perturbation result on the Cholesky decomposition (Theorem 1.1 of \cite{sun}).
\begin{thm}(\cite{sun})\label{thm:QR}
Let $A$ be an $n\times n$ positive definite matrix and $A=LL^H$ its Cholesky factorization. If $K$ is an $n\times n$ Hermitian matrix satisfying $\|A^{-1}\|_2\|K\|_F<1/2$, then there is a unique Cholesky factorization
\[A+K=(L+G)(L+G)^H\]
and
\[
\frac{\|G\|_F}{\|L\|_2}\leq \sqrt 2 \frac{\kappa_2(A)\|K\|_F/\|A\|_2}{1+\sqrt{(1-2\kappa_2(A)\|K\|_F/\|A\|_2)}},
\]
where $\kappa_2(A)=\|A\|_2\|A^{-1}\|_2$.
\end{thm}
\section{MAIN RESULT}
We are now ready to state our main result.
\begin{thm}\label{thm:main}
Let $Y$ and $\widetilde{Y}$ and their SVDs be defined as in \eqref{eq:Y} and \eqref{eq:tilde}. Assume that $\epsilon$ and $\frac{1}{n}$ are small enough such that the following defined $E_1$ and $\delta_1$ satisfy $E_1<1/(2\sqrt k)$ and $\delta_1>0$, then for any $0<\beta<1/2$ and $\gamma>0$, with probability (with respect to the random Gaussian noise W) exceeding $1-3e^{-(n-k)^\beta+\ln(k(k+n))}-5e^{-(n-k)\gamma^2/2}$, there exists a unitary $k \times k$ matrix $M$, such that
\begin{equation}\label{eq:thm}
\|\widetilde{U}_1M-(U_1+\epsilon N)\|_{\max}\leq \frac{4\sqrt{k}E_3}{\delta_1}+2k \frac{E_1}{1+\sqrt{1-2\sqrt k E_1}}(\|U_1\|_{\max}+E_2)+E_4.
\end{equation}
where $N$ is a Gaussian matrix (matrix with all entries being Gaussian) defined by $N= U_2U_2^TWV_1\Sigma_1^{-1}$, and where
\begin{eqnarray*}
E_1(\epsilon,\Sigma,k,n,\gamma)&=&\epsilon^2\|\Sigma_1^{-1}\|_2^2\alpha_1^2+2\epsilon^3\|\Sigma_1^{-1}\|_2^3\alpha_1^2\alpha_2+\epsilon^4\|\Sigma_1^{-1}\|^4_2\alpha_1^2\alpha_2^2,\\
E_2(\epsilon,\Sigma,k,n,\gamma)&=&\epsilon \alpha_1\|\Sigma_1^{-1}\|_2+\epsilon^2\|\Sigma_1^{-1}\|_2^2\alpha_1\alpha_3, \\
 E_3(\epsilon,\Sigma,k,n,\gamma) &= &\epsilon^3 \|\Sigma^{-1}\|_2^2(\alpha_1^2\alpha_3+2\alpha_1^2\alpha_2+2\alpha_1\alpha_2\alpha_3)+\epsilon^4 \|\Sigma^{-1}\|_2^3(\alpha_1^2\alpha_2^2+2\alpha_1^2\alpha_2\alpha_3)\notag \\ &+&\epsilon^5 \|\Sigma^{-1}\|_2^4(\alpha_1^2\alpha_2^2\alpha_3), \\
E_4(\epsilon,\Sigma,k,n,\gamma)&=& 2\epsilon^2(1+k)(n-k)^{-\frac{1}{2}+\beta}\|\Sigma_1^{-1}\|_2^2 ,\\
 \delta_1&=&  \frac{2}{3}\left(\sigma_{\min}(\Sigma_1)-\epsilon(\alpha_2-\alpha_1)-2\epsilon^2\|\Sigma_1^{-1}\|_2\alpha_1\right)-\sqrt{6\epsilon \|\Sigma_1\|_2+9\epsilon^2},
\end{eqnarray*}
and $\alpha_1=1+\gamma+\sqrt{\frac{k}{n}}$, $\alpha_2=3+2\gamma+2\sqrt{\frac{k}{n}}$, $\alpha_3=2+\gamma$.
\end{thm}
Note that the order of $E_1$-$E_4$ in terms of $\epsilon$ and $n$ are: $E_1 =O (\epsilon^2)$,  $E_2=O(\epsilon)$, $E_3 =O( \epsilon^3)$,  $E_4 =O(\epsilon^2 n^{-\frac{1}{2}+\beta})$ and $\delta_1=O(1)$. The derivation of these terms can be found in \eqref{eq:E1},\eqref{eq:E2},\eqref{eq:E3},\eqref{eq:E4} and \eqref{eq:delta1} in the next section.
\begin{remark} Here we use the element-wise maximum norm instead of the Frobenius norm to characterize the error because from the dimension reduction point of view, we care about all the points in the data matrix instead of the matrix as a whole.
\end{remark}
\begin{remark}
When assuming $rank(Y)=k$, we have allowed infinitely  many choices of $U_2$ in the SVD of $Y$ but all of them span the same singular subspace of $Y$. It is easy to verify that the result and all the terms defined in Theorem \ref{thm:main} only depend on this singular subspace instead of any particular choice of $U_2$.
\end{remark}
\begin{remark}\label{rmk:2} We observe that the difference $\widetilde{U}_1M-U_1$ is approximately  Gaussian only when the Gaussian term $\epsilon N$ is the leading term in the error. The magnitude of each element of $\epsilon N $ has asymptotic order of $O(\epsilon /\sqrt{n})$ as $\epsilon$ and $\frac{1}{n}$ going to 0, while the order of the leading terms on the right hand side is either $\epsilon^2\|U_1\|_{\max}$, $\epsilon^2n^{-\frac{1}{2}+\beta}$, or $\epsilon^3$. Therefore, roughly speaking, to ensure the Gaussian term has the lowest order, we need the following condition for the $(\epsilon, n)$ pair:
\begin{equation}\label{eq:epsilon} \epsilon =o\left( \min\left\{n^{-\beta}, n^{-1/4}, \frac{1}{\|U_1\|_{\max}n^{\frac{1}{2}}}\right\}\right).\end{equation}
In fact, Remark \ref{rmk:4} indicates that this estimate can be further improved to exclude the $n^{-1/4}$ term from the right-hand side of \eqref{eq:epsilon}. Since it involves a much more complicated argument with little innovation, it is beyond the scope of this note. Even from \eqref{eq:epsilon}, we are able to tell that the Gaussianity of the lower dimensional data depends on how the original eigenvectors $U_1$ are spread out. If the entires of $U_1$ have constant magnitude as that in the example in section 5, then $\epsilon=o(n^{-1/4})$ is sufficient.
\end{remark}
\begin{remark}
A first application to our result, which is also the motivation to this paper, is an audio signal classification problem outlined in Section 5. There, the perturbation problem is $\widetilde{Y}=Y+\frac{1}{\sqrt n} W$ ($W$ is a standard Gaussian matrix). Instead of being a free variable, the noise level is now a function of the size $n$ of $Y$.  As we can no longer fix a matrix and let the noise level goes to 0, classical results (\cite{Li, Va}) are not directly applicable to this problem. 
\end{remark} 

Before going into the proof of the main theorem, we first establish three useful lemmas. The first one is an elementary observation, so we omit its proof.
\begin{lm}\label{lm:1}
If $A=\left(\begin{matrix} 0 & A_{12} \\ A_{21} & 0\end{matrix}\right )$, then $\|A\|_{2}\leq  \max\left\{\|A_{12}\|_2,\|A_{21}\|_2\right\}.$
\end{lm}


\begin{lm}\label{lm:2}
Let $\mathcal{F}$ be the distribution of the product of two independent $N(0,1)$ random variables. Let $X_1$, $X_2$,..., $X_n$ be i.i.d. random variables drawn from $\mathcal{F}$. If $n\geq 2$, we have
\[
P(\overline{X}>n^{-\frac{1}{2}+\beta})\leq 2e^{-n^{\beta}}, \ \ \ \ \ \ \ \text{ for any    } \beta \in \mathbb{R},
\]
where $\overline{X}=\left(\sum\limits_{i=1}^{n}X_i\right)/n$.
\end{lm}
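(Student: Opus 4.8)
The plan is to realize $\sum_{i=1}^n X_i$ as an inner product of two independent standard Gaussian vectors and then condition on one of them. Since each $X_i$ has the law of a product of two independent $N(0,1)$ variables and the $X_i$ are independent, we may write $X_i=a_ib_i$ with $a=(a_1,\dots,a_n)$ and $b=(b_1,\dots,b_n)$ independent $N(0,I_n)$ vectors, so that $\sum_{i=1}^n X_i=\langle a,b\rangle$ and the claim concerns $P(\langle a,b\rangle>n^{1/2+\beta})$. Conditioning on $b$, we have $\langle a,b\rangle\sim N(0,\|b\|^2)$, and $\|b\|^2/n$ is precisely the squared (unique) singular value of the $n\times1$ Gaussian matrix $b/\sqrt n$ that appears in Theorem 2. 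The problem thus reduces to (i) controlling $\|b\|$ and (ii) a scalar Gaussian tail bound $P(N(0,\sigma^2)>x)\le\tfrac12 e^{-x^2/(2\sigma^2)}$.

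There are two natural ways to finish. The ``Theorem 2'' route: apply Theorem 2 with $k=1$ to $b/\sqrt n$ to get $P(\|b\|>\sqrt n+1+\gamma\sqrt n)\le e^{-n\gamma^2/2}$ for every $\gamma>0$, split $P(\langle a,b\rangle>n^{1/2+\beta})$ over this event and its complement, and choose $\gamma$ of order $n^{(\beta-1)/2}$ so that both of the resulting exponents sit at level $n^\beta$; a single threshold is somewhat lossy for small $n$, so there a finer decomposition of $\{\|b\|\text{ large}\}$, with the scalar tail bound applied at each scale, is needed. The conditioning-free route: the same computation gives the closed-form moment generating function $E\,e^{tX_1}=(1-t^2)^{-1/2}$ for $|t|<1$ (since $E\,e^{ta_1b_1}=E\,e^{t^2b_1^2/2}=(1-t^2)^{-1/2}$), whence the Chernoff bound $P(\overline X>n^{-1/2+\beta})\le e^{-t\,n^{1/2+\beta}}(1-t^2)^{-n/2}$ for $0<t<1$; inserting the exact minimizer $t^*=(\sqrt{1+4n^{2\beta-1}}-1)/(2n^{-1/2+\beta})$ collapses the right side to a clean exponential and reduces the lemma to a scalar inequality.

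The conceptual part ends here; the actual work is to track the constant so that the bound is exactly $2e^{-n^\beta}$ for every $n\ge2$, not merely asymptotically. In the Chernoff version the leftover inequality is, after simplification, of the form $c_1u^2-u+\ln2\ge0$ with $u=n^\beta$ and $c_1\approx\tfrac12$ (up to lower-order terms controlled by the hypothesis $n\ge2$); its discriminant $1-4c_1\ln2<0$, so it holds for all real $u$ — but the lower-order corrections are exactly what force $n\ge2$, and the Chernoff estimate is in fact attained there (equality at $(n,\beta)=(2,1)$, even though the true probability is far smaller). It is also worth noting that the bound is very slack away from the range used in Theorem 4: for $\beta\le0$ the target $2e^{-n^\beta}\ge2/e$ absorbs all slack and is vacuous as $\beta\to-\infty$, while for $\beta\ge1/2$ the deviation $n^{1/2+\beta}$ is so large that the exponential tail of $\langle a,b\rangle$ beats $2e^{-n^\beta}$ handily, so only $0<\beta<1/2$ needs real attention. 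The main obstacle is therefore the constant-chasing at small $n$, not the overall strategy.
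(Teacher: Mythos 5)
Your second (``conditioning-free'') route is the same approach as the paper's proof: the paper computes exactly the mgf $Ee^{\theta X_1}=(1-\theta^2)^{-1/2}$ and applies Markov's inequality to $e^{t\overline X}$. But you stop where the actual work begins, and the sketch you give for that remaining work does not close. You propose to insert the exact minimizer $t^*$ and assert that the verification reduces to $c_1u^2-u+\ln 2\ge 0$ with $u=n^\beta$ and $c_1\approx\tfrac12$, killed by a negative discriminant. However, the cumulant generating function $-\tfrac12\ln(1-t^2)$ is strictly larger than $t^2/2$, so the optimized exponent is strictly \emph{smaller} than $\tfrac12 n^{2\beta}$; the effective $c_1$ sits below $\tfrac12$, and the discriminant argument survives only if one proves $c_1>1/(4\ln 2)\approx 0.36$ uniformly over $n\ge 2$ and over all real $\beta$ (the lemma is stated for every $\beta$, and the quadratic approximation is worst exactly in the regimes you set aside). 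That uniform bound is precisely the ``constant-chasing at small $n$'' you defer, so as written the proof is incomplete; the conclusion is true, but not for the reason your sketch gives.

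The fix is to not optimize, which is what the paper does: take the fixed value $t=\sqrt n$ acting on $\overline X$ (equivalently $\theta=n^{-1/2}$ per summand, admissible since $n\ge2$ gives $\theta<1$). The bound is then exactly $e^{-n^\beta}\bigl(1-\tfrac1n\bigr)^{-n/2}$, and $\bigl(1-\tfrac1n\bigr)^{-n/2}$ is decreasing in $n$ with value $2$ at $n=2$, giving $2e^{-n^\beta}$ for all $\beta$ with no case analysis. Your observation that equality holds at $(n,\beta)=(2,1)$ is correct and consistent with this: there $t^*=1/\sqrt2=n^{-1/2}$, so the suboptimal choice coincides with the optimal one and the constant $2$ is exactly what Chernoff yields. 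Your first route (conditioning on one Gaussian vector and invoking Theorem 2) is, as you yourself note, lossy and is not developed far enough to assess.
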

\begin{proof}
If $X_i \sim \mathcal{F}$, one can verify that for all $0\leq \theta<1$,
\[
Ee^{\theta X_i}=\sqrt{\frac{1}{1-\theta^2}}.
\]
We apply Markov's inequality (see eg. \cite{Stein}) to have:
\[
P(\overline{X}\geq n^{-\frac{1}{2}+\beta})=P(e^{t\overline{X}}\geq e^{tn^{-\frac{1}{2}+\beta}})\leq \frac{Ee^{t\overline{X}}}{e^{tn^{-\frac{1}{2}+\beta}}}, \ \ \ \ \text{for any } t \in \mathbb{R}^+.
\]
Letting $t=n^{1/2}$ in the above formula, we get
\begin{eqnarray*}
&&P(\overline{X}\geq n^{-\frac{1}{2}+\beta}) \leq \frac{Ee^{n^{1/2}\overline{X}}}{e^{n^{\beta}}}
= e^{-n^{\beta}}\prod\limits_{i=1}^{n}E(e^{n^{1/2}X_i}) \\
&=&  e^{-n^{\beta}}(1-\frac{1}{n})^{-n/2}
\leq  e^{-n^{\beta}}(1-\frac{1}{2})^{-2/2}
=  2e^{-n^{\beta}}.
\end{eqnarray*}
\end{proof}
\begin{lm}\label{lm:3}
Let $T$ be an $n\times n$ Gaussian matrix whose elements are i.i.d. $N(0,\frac{1}{n})$. Let $T$ be written as
\[
T=\left(\begin{matrix}T_{11} & T_{12} \\T_{21} & T_{22} \end{matrix}\right),\]
with $T_{11}$ a $k\times k$ matrix. Then, with probability exceeding $1-4e^{-(n-k)\gamma^2/2}$,
\[
\|T_{11}\|_2\leq \gamma+2\sqrt{\frac{k}{n}}, \ \ \
\|T_{21}\|_2,\ \|T_{12}\|_2\leq 1+\gamma+\sqrt{\frac{k}{n}},\ \ \
\|T\|_2,\ \|T_{22}\|_2\leq 2+\gamma.
\]
\end{lm}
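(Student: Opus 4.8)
The plan is to treat each of the four blocks $T_{11},T_{12},T_{21},T_{22}$, as well as $T$ itself, as a standalone Gaussian matrix: each has i.i.d.\ $N(0,1/n)$ entries, so writing the block of shape $p\times q$ as $\tfrac{1}{\sqrt n}G$ with $G$ a $p\times q$ matrix of i.i.d.\ $N(0,1)$ entries, the two-parameter Davidson--Szarek bound underlying Theorem 2 (\cite{Szarek}), namely $P(\sigma_{\max}(G)>\sqrt p+\sqrt q+t)\le e^{-t^2/2}$ for all $t\ge 0$, applies verbatim. This yields, for each block and each $t\ge 0$, with failure probability at most $e^{-t^2/2}$,
\[
\|T_{11}\|_2\le \tfrac{2\sqrt k+t}{\sqrt n},\qquad \|T_{12}\|_2,\ \|T_{21}\|_2\le \tfrac{\sqrt k+\sqrt{n-k}+t}{\sqrt n},\qquad \|T\|_2\le \tfrac{2\sqrt n+t}{\sqrt n}.
\]

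First I would fix the deviation parameter uniformly as $t=\gamma\sqrt{n-k}$, so that every one of these tail probabilities becomes exactly $e^{-(n-k)\gamma^2/2}$. Substituting this $t$ and using $\sqrt{(n-k)/n}\le 1$ collapses the right-hand sides to the advertised bounds $\gamma+2\sqrt{k/n}$ for $\|T_{11}\|_2$, $1+\gamma+\sqrt{k/n}$ for $\|T_{12}\|_2$ and $\|T_{21}\|_2$, and $2+\gamma$ for $\|T\|_2$. The bound on $\|T_{22}\|_2$ then comes for free, because $T_{22}$ is a submatrix of $T$ and hence $\|T_{22}\|_2\le\|T\|_2\le 2+\gamma$; this observation is what keeps the number of events needed down to four instead of five.

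The last step is a union bound over the four events governing $T_{11}$, $T_{12}$, $T_{21}$, and $T$, each failing with probability at most $e^{-(n-k)\gamma^2/2}$, so that all the displayed inequalities hold simultaneously with probability at least $1-4e^{-(n-k)\gamma^2/2}$, as claimed.

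I do not anticipate a genuine obstacle: the argument is essentially bookkeeping. The only points deserving a little attention are (i) that the normalization in Theorem 2 is tied to the row count $n$, so for blocks whose natural row count is $k$ or $n-k$ it is cleanest to invoke the symmetric two-parameter form rather than Theorem 2 literally; and (ii) that $t$ must be chosen once and for all --- here $\gamma\sqrt{n-k}$, discarding the slightly sharper $\gamma\sqrt n$ that would be available for the full matrix --- so that the single quantity $e^{-(n-k)\gamma^2/2}$ dominates every term in the final union bound.
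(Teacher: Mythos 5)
Your proposal is correct and matches the paper's intended argument: the paper omits the proof of this lemma precisely because it is ``a direct application of Theorem 2,'' i.e.\ the Szarek/Davidson--Szarek bound applied blockwise with a union bound, which is exactly what you do. Your two points of care --- using the two-parameter form to handle the mismatch between the blocks' row counts and the $1/n$ variance normalization, choosing $t=\gamma\sqrt{n-k}$ uniformly, and absorbing $T_{22}$ into the event for $T$ --- are sound and account correctly for the stated probability $1-4e^{-(n-k)\gamma^2/2}$.
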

\begin{proof}
Applying Theorem \ref{thm:condition} to the matrix $\sqrt{\frac{n}{k}}T_{11}$, we get
\[
P\left(\sqrt{\frac{n}{k}}\|T_{11}\|>2+\widetilde{\gamma}\right)\leq e^{-k\widetilde{\gamma}^2/2}.
\]
Let $\gamma=\sqrt{\frac{n}{k}}\widetilde{\gamma}$ and substitute it into the above equation to obtain
\[
P\left(\|T_{11}\|>2\sqrt{\frac{k}{n}}+\gamma\right)\leq e^{-n\gamma^2/2}.
\]
The other four inequalities can be derived similarly.
\end{proof}
\section{PROOF OF THE MAIN THEOREM}
\begin{proof}
 Define the random matrix $C$ as follows,
\begin{equation}\label{eq:ori}
U^{T}\widetilde{Y}V=\Sigma+\epsilon U^{T}WV = \Sigma +\epsilon C,
\end{equation}
where $\Sigma=\left(\begin{matrix}\Sigma_1 & 0 \\ 0 & 0\end{matrix}\right)$ and $C = \left( \begin{array}{ccc} C_{11} & C_{12} \\ C_{21} & C_{22} \end{array} \right)
 = \left( \begin{array}{ccc} U_{1}^{T}WV_{1} & U_{1}^{T}WV_{2} \\ U_{2}^{T}WV_{1} & U_{2}^{T}WV_{2} \end{array} \right).$ Due to the invariant property of Gaussian matrix, $C$ has the same distribution as $W$. We want to further diagonalize $C$ by using the following two matrices,
\[
P = I + \epsilon \left( \begin{array}{ccc} 0 & -\Sigma^{-1}_{1}C^{T}_{21} \\C_{21}\Sigma^{-1}_{1}  & 0 \end{array} \right)+\epsilon^{2}\left( \begin{array}{ccc} 0 & B^{T} \\ -B & 0 \end{array} \right),
\]
with $B=-C_{22}C_{12}^{T}\Sigma^{-2}_1+C_{21}\Sigma^{-1}_{1}C_{11}\Sigma_{1}^{-1}$, and
\begin{equation}\label{eq:OP}
O = I + \epsilon \left( \begin{array}{ccc} 0 & -\Sigma^{-1}_{1}C_{12} \\C_{12}^{T}\Sigma^{-1}_{1}  & 0 \end{array} \right)+\epsilon^{2}\left( \begin{array}{ccc} 0 & D \\ -D^T& 0 \end{array} \right),
\end{equation}
with $D=-\Sigma_1^{-2}C_{21}^TC_{22}+\Sigma_1^{-1}C_{11}\Sigma^{-1}_{1}C_{12}$.  
We multiply the left hand side of \eqref{eq:ori} by $P^T$ on the left, and by $O$ on the right, to obtain,
\begin{eqnarray}\label{eq:diag}
&&P^{T}U^{T}\widetilde{Y}VO =  \left(I + \epsilon \left( \begin{array}{ccc} 0 & -\Sigma^{-1}_{1}C^{T}_{21} \\C_{21}\Sigma^{-1}_{1}  & 0 \end{array} \right)+\epsilon^{2}\left( \begin{array}{ccc} 0 & B^{T} \\ -B & 0 \end{array} \right)\right)^T\notag\\ &&\times \left( \Sigma +\epsilon \left( \begin{array}{ccc} C_{11} & C_{12} \\ C_{21} & C_{22} \end{array} \right)\right)\notag\left( I + \epsilon \left( \begin{array}{ccc} 0 & -\Sigma^{-1}_{1}C_{12} \\C_{12}^{T}\Sigma^{-1}_{1}  & 0 \end{array} \right)+\epsilon^{2}\left( \begin{array}{ccc} 0 & D \\ -D^T& 0 \end{array} \right)\right) \notag\\
&=&\left(\begin{matrix}\Sigma_1+\epsilon C_{11}+\epsilon^2(\Sigma_1^{-1}C_{21}^TC_{21}+C_{12}C_{12}^{T}\Sigma^{-1}_1 )&0 \\0& \epsilon C_{22}-2\epsilon^2C_{21}\Sigma^{-1}_1C_{12}\end{matrix}\right)+E,
\end{eqnarray}
where the matrix $E$ includes all the terms whose order on $\epsilon$ is greater than or equal to 3, i.e., $E$ is of $O(\epsilon^3)$. 

If we write $UP$ as $UP=((UP)_1,(UP)_2)$, and $VO$ as $VO=((VO)_1,(VO)_2)$ with $(UP)_1$ and $(VO)_1$ being $n\times k$, then the submatrices have the following expressions,
\begin{eqnarray}\label{eq:up}
(UP)_1&=&U_1+\epsilon U_2C_{21}\Sigma_1^{-1}-\epsilon^2 U_2 B, \notag\\
(UP)_2&=&U_2-\epsilon U_1\Sigma_1^{-1}C_{21}^T+\epsilon^2U_1B^T,\notag\\
(VO)_1&=&V_1+\epsilon V_2C_{12}^T\Sigma_1^{-1}-\epsilon^2 V_2 D^T,\notag\\
(VO)_2&=&V_2-\epsilon V_1\Sigma_1^{-1}C_{12}^T+\epsilon^2V_1D.
\end{eqnarray}
It can be directly verified that $(UP)_1^T(UP)_2=0$. This implies that $(UP)_1$ and $(UP)_2$ are orthogonal to each other, and the same result holds for $(VO)_1$ and $(VO)_2$. However, none of these four matrices are orthogonal matrices themselves.
We orthogonalize them by multiplying both sides of equation \eqref{eq:diag} by $\left( \begin{matrix}
R_{(UP)_1}^{-T} & 0 \\ 0 & R_{(UP)_2}^{-T}
\end{matrix}\right)$ on the left, and by $\left( \begin{matrix}
R_{(VO)_1}^{-1} & 0 \\ 0 & R_{(VO)_2}^{-1}
\end{matrix}\right)$ on the right, where $Q_{A}$ and $R_{A}$ denote a QR decomposition of a matrix $A$. This way we obtain:
\begin{eqnarray}\label{eq:leftmulti}
&&\left( \begin{matrix}
R_{(UP)_1}^{-T} & 0 \\ 0 & R_{(UP)_2}^{-T}
\end{matrix}\right) P^{T}U^{T}\widetilde{Y}VO\left( \begin{matrix}
R_{(VO)_1}^{-1} & 0 \\ 0 & R_{(VO)_2}^{-1}
\end{matrix}\right)= \left( \begin{matrix}L_1 & 0 \\ 0 & L_2 \end{matrix}\right)+E,
\end{eqnarray}
where
\begin{eqnarray*} L_1&=&R_{(UP)_1}^{-T}(\Sigma_1+\epsilon C_{11}+\epsilon^2(\Sigma_1^{-1}C_{21}^TC_{21}+C_{12}C_{12}^{T}\Sigma_1^{-1})) R_{(VO)_1}^{-1},\\
 L_2&=&R_{(UP)_2}^{-T}( \epsilon C_{22}-\epsilon^2C_{21}\Sigma^{-1}_1C_{12})R_{(VO)_2}^{-1}.\end{eqnarray*}
With a little abuse of notation, we continue to use $E$ to denote the error term in \eqref{eq:leftmulti}, though it was already changed by the left and right multiplications.
We denote by $M_i\widehat{\Sigma}_iM_i'$ the SVD of the $L_i$, with $i=1,2$. In this case, \eqref{eq:leftmulti} becomes
\begin{eqnarray}\label{eq:dec}
\left( \begin{matrix}
R_{(UP)_1}^{-T} & 0 \\ 0 & R_{(UP)_2}^{-T}
\end{matrix}\right) P^{T}U^{T}\widetilde{Y}VO\left( \begin{matrix}
R_{(VO)_1}^{-1} & 0 \\ 0 & R_{(VO)_2}^{-1}
\end{matrix}\right)= \left( \begin{matrix}
M_1\widehat{\Sigma}_1M_1' & 0 \\ 0 & M_2\widehat{\Sigma}_2M_2'
\end{matrix}\right)+E.\end{eqnarray}
On the other hand, the left hand side of \eqref{eq:dec} can be simplified to
\begin{eqnarray} \label{eq:main}
&&\left( \begin{matrix}
R_{(UP)_1}^{-T} & 0 \\ 0 & R_{(UP)_2}^{-T}
\end{matrix}\right) P^{T}U^{T}\widetilde{Y}VO\left( \begin{matrix}
R_{(VO)_1}^{-1} & 0 \\ 0 & R_{(VO)_2}^{-1}
\end{matrix}\right)\notag\\
&=& \left(\begin{matrix} R_{(UP)_1}^{-T}(UP)_1^T \\  R_{(UP)_2}^{-T}(UP)_2^T \end{matrix}\right)\widetilde{Y}\left(\begin{matrix} (VO)_1R_{(VO)_1}^{-1} & (VO)_2 R_{(VO)_2}^{-1} \end{matrix}\right) \notag\\
&=& \left( \begin{matrix}
Q_{(UP)_1}& Q_{(UP)_2}
\end{matrix}\right)^T\widetilde{Y} \left( \begin{matrix}
Q_{(VO)_1}& Q_{(VO)_2}
\end{matrix}\right).
\end{eqnarray}
Combining \eqref{eq:dec} with \eqref{eq:main}, we obtain:
\begin{equation}\label{eq:ort}
\left( \begin{matrix}
Q_{(UP)_1}& Q_{(UP)_2}
\end{matrix}\right)^T\widetilde{Y} \left( \begin{matrix}
Q_{(VO)_1}& Q_{(VO)_2}
\end{matrix}\right)=\left( \begin{matrix}
M_1\widehat{\Sigma}_1M_1' & 0 \\ 0 & M_2\widehat{\Sigma}_2M_2'
\end{matrix}\right)+E.
\end{equation}
Notice that matrices that are left and right to $\widetilde{Y}$ in \eqref{eq:ort} are unitary, because $Q_{(UP)_i}$ $i=1,2$ have the same span as $(UP)_i$, $i=1,2$, respectively, and we know that $(UP)_1$ and $(UP)_2$ are mutually orthogonal. Moving everything but $\widetilde{Y}$ on the left hand side to the right by multiplying the transpose of each matrix, we derive:
\begin{equation}\label{eq:I}
\widetilde{Y}= \left( \begin{matrix}
Q_{(UP)_1}M_1& Q_{(UP)_2}M_2
\end{matrix}\right) \left( \begin{matrix}
\widehat{\Sigma}_1& 0 \\ 0 & \widehat{\Sigma}_2
\end{matrix}\right)\left( \begin{matrix}
Q_{(VO)_1}M_1'& Q_{(VO)_2}M_2'
\end{matrix}\right)^T+E.
\end{equation}
We will show later that $\sigma_{\min}({\widehat{\Sigma}_1})-\|\widetilde{\Sigma}_2)\|_2>0$ when $\epsilon$ is small enough. Then \eqref{eq:I} combined with Theorem \ref{thm:sta} imply that $\widetilde{U}_1$ and $Q_{(UP)_1}M_1$ are the left singular vectors of two very similar matrices (different by the error matrix $E$) and so that they are close. Keeping this useful result in mind, we first turn to look at the big picture.

Our final goal is to approximate by a Gaussian variable the difference between $U_1$ and $\widetilde{U}_1$ up to a rotation: $U_1-\widetilde{U}_1M$ (we will define the unitary matrix M explicitly later), which can be decomposed as:
\begin{equation}\label{eq:decom1}
\widetilde{U}_1M-U_1=(\widetilde{U}_1M-Q_{(UP)_1})+(Q_{(UP)_1}-(UP)_1)+((UP)_1-U_1).
\end{equation}
We insert \eqref{eq:up} into \eqref{eq:decom1} to get
\begin{eqnarray*}
\widetilde{U}_1M-U_1
=\left(\widetilde{U}_1M-Q_{(UP)_1}\right)+(Q_{(UP)_1}-(UP)_1)+(\epsilon U_2C_{21}\Sigma_1^{-1}-\epsilon^2 U_2 B).
\end{eqnarray*}
Here, the first term in the last parentheses is Gaussian,  and we want to prove all other terms are small. For that purpose, we move the Gaussian term to the left and take the component-wise matrix norm on both sides, to have:
\begin{eqnarray}\label{eq:III}
&&\|\widetilde{U}_1M-U_1-\epsilon U_2C_{21}\Sigma_1^{-1}\|_{\max}\notag \\ &\leq&\|Q_{(UP)_1}-(UP)_1\|_{\max}+\|\widetilde{U}_1M-Q_{(UP)_1}\|_{\max}+\epsilon^2\|U_2 B\|_{\max}\\
&=& I+II+III \notag.
\end{eqnarray}
Observe that the left hand side of \eqref{eq:III} is exactly what we want to bound in this theorem.
The rest of the proof is divided into three parts to bound each term $I$, $II$, $III$ on the right hand side. \\
\subsection{ESTIMATING I}
We start with calculating how far away is $UP$ from unitary.
\[
P^TU^TUP=P^TP=I+\epsilon^2\left(\begin{matrix}\Sigma_1^{-1}C_{21}^TC_{21}\Sigma_{1}^{-1} &0\\0& C_{21}\Sigma_1^{-2}C_{21}^T\end{matrix}\right)+O(\epsilon^3).
\]
Hence,
\[(UP)_1^T(UP)_1=I+\epsilon^2(\Sigma_1^{-1}C_{21}^TC_{21}\Sigma_{1}^{-1})+O(\epsilon^3).
\]
It can be directly calculated (see Proposition 1 below) that
\begin{equation}\label{eq:spebd}
\|(UP)_1^T(UP)_1-I\|_2 \leq E_1(\epsilon,\Sigma,k,n,\gamma),
\end{equation}
with probability over $1-4e^{-(n-k)\gamma^2/2}$.
Where
\begin{equation}\label{eq:E1}
E_1=\epsilon^2\|\Sigma_1^{-1}\|_2^2\alpha_1^2+2\epsilon^3\|\Sigma_1^{-1}\|_2^3\alpha_1^2\alpha_2+\epsilon^4\|\Sigma_1^{-1}\|^4_2\alpha_1^2\alpha_2^2.
\end{equation}Thus,
\begin{equation}\label{eq:sigma}
\|(UP)_1^T(UP)_1-I\|_F \leq\sqrt k \|(UP)_1^T(UP)_1-I\|_2 \leq\sqrt{k}E_1(\epsilon,\Sigma,k,n,\gamma).
\end{equation}
Applying Theorem \ref{thm:QR} by assigning $A=I$, $L=I$, $\widetilde {A}=(UP)_1^T(UP)_1=R_{(UP)_1}^TR_{(UP)_1}$, $K=\widetilde{A}-A=(UP)_1^T(UP)_1-I$ and $G=R_{(UP)_1}^T-I$, we obtain
\begin{eqnarray}\label{eq:dist}
\|Q_{(UP)_1}-(UP)_1\|_2&=& \|Q_{(UP)_1}(I-R_{(UP)_1})\|_2= \|I-R_{(UP)_1}\|_2 \notag \\
&\leq& \|I-R_{(UP)_1}\|_F=\|G\|_F\leq \frac{\sqrt 2\|K\|_F}{1+\sqrt{1-2\|K\|_F}} \leq \frac{\sqrt{2k} E_1}{1+\sqrt{1-2\sqrt k E_1}},
\end{eqnarray}
where the last inequality made use of \eqref{eq:sigma}. For fixed $Y$ and $\gamma$, equation \eqref{eq:dist} has implied the necessity of imposing the condition $E_1\leq 1/2\sqrt k$ on $\epsilon$ for the term under the square root to be positive, which we assume to be true from now on. We proceed to calculate
\begin{eqnarray}\label{eq:3}
I&=&\|Q_{(UP)_1}-(UP)_1\|_{\max} \notag\\
&=&\|(UP)_1(R_{(UP)_1}^{-1}-I)\|_{\max} \notag\\
&\leq & \sqrt{k}\|(UP)_1\|_{\max}\cdot\|R_{(UP)_1}^{-1}-I\|_2 \notag\\
&= & \sqrt{k}\|(UP)_1\|_{\max}\cdot\|R_{UP}^{-1}\|_2\cdot\|I-R_{(UP)_1}\|_2 \notag\\
&\leq & \sqrt{k}\|(UP)_1\|_{\max}\cdot(\sigma_{\min}((UP)_1))^{-1}\cdot\|Q_{(UP)_1}-(UP)_1\|_2.
\end{eqnarray}
From \eqref{eq:sigma} and the assumption that $E_1<\frac{1}{2\sqrt{k}}$, we know that $|\sigma_{min}((UP)_1)^2-1|<\frac{1}{2}.$ Therefore,
\begin{equation}\label{eq:inverse} \frac{1}{\sigma_{min}((UP)_1)}<\sqrt{2}.\end{equation}
We insert \eqref{eq:dist} and \eqref{eq:inverse} into \eqref{eq:3}, to arrive at the bound:
\[
I \leq 2k \frac{E_1}{1+\sqrt{1-2\sqrt k E_1}}\|(UP)_1\|_{\max}.
\]
Furthermore, from \eqref{eq:up} and Lemma \ref{lm:3}, it is straightforward to estimate that with probability $1-4e^{-(n-k)\gamma^2/2}$,
\begin{eqnarray}\label{eq:E2}
&&\|(UP)_1\|_{\max}\leq  \|U_1\|_{\max}+\epsilon\|U_2C_{21}\Sigma_1^{-1}\|_2+\epsilon^2\|U_2B\|_2 \notag \\
&& \leq\|U_1\|_{\max}+\epsilon \alpha_1\|\Sigma_1^{-1}\|_2+\epsilon^2\|\Sigma_1^{-1}\|_2^2\alpha_1\alpha_2\equiv \|U_1\|_{\max}+E_2,
\end{eqnarray}
with $\alpha_1=1+\gamma+\sqrt{\frac{k}{n}}$, $\alpha_2=3+2\gamma+2\sqrt{k/n}$ same as those defined in Theorem \ref{thm:main}.
We combine the above two inequalities to get:
\[
I\leq 2k \frac{E_1}{1+\sqrt{1-2\sqrt k E_1}} (\|(U_1\|_{\max}+E_2).
\]

\subsection{ESTIMATING II}
From \eqref{eq:tilde} and \eqref{eq:I}, we know that $(\widetilde{U}_1,\widetilde{U}_2)$ and $(Q_{(UP)_1}M_1,Q_{(UP)_2}M_2)$ are the left singular vectors of $\widetilde{Y}$ and $\widetilde{Y}-E:=\widehat{Y}$, respectively. Thus, we want to use Theorem \ref{thm:sta} to bound $II$. For this purpose, we need to control both $\|\widetilde{\Sigma}_2\|$ and $\sigma_{\min}(\widehat{\Sigma}_1)$.

Let us first estimate $\sigma_{\min}(\widehat{\Sigma}_1)$, which stores the singular values of the matrix $L_1=R_{(UP)_1}^{-T}(\Sigma_1+\epsilon C_{11}+\epsilon^2(\Sigma_1^{-1}C_{21}^TC_{21}+C_{12}C_{12}^{T}\Sigma_1^{-1})) R_{(VO)_1}^{-1}$. From equation \eqref{eq:spebd} and the assumption $E_1<\frac{1}{2\sqrt k}<\frac{1}{2}$, we have
\[
\sigma_{\min}(R^{-1}_{(UP)_1})=\frac{1}{\|(UP)_1\|_2}\geq \sqrt{\frac{2}{3}}.
\]
Similarly, $\sigma_{\min}(R^{-1}_{(VO)_1})\geq \sqrt{2/3}$. Moreover, from Lemma \ref{lm:3}, we know that with probability $1-3e^{-(n-k)\gamma^2/2}$, it hold
\[
\|T_{11}\|_2\leq \gamma+2\sqrt{\frac{k}{n}}, \ \ \ \|T_{12}\|_2, \|T_{21}\|_2\leq 1+\gamma+\sqrt{\frac{k}{n}}.
\]
Combining these facts yields,
\begin{equation}\label{eq:sigmahat}
\sigma_{\min}(\widehat{\Sigma}_1)=\sigma_{\min}(L_1)\geq \frac{2}{3}(\sigma_{\min}(\Sigma_1)-\epsilon(\alpha_2-\alpha_1)-2\epsilon^2\|\Sigma_1^{-1}\|_2\alpha_1^2),
\end{equation}
where $\alpha_1=1+\gamma+\sqrt{k/n}$, and $\alpha_2=3+2\gamma+2\sqrt{k/n}$ as defined in Theorem \ref{thm:main}.

Now let us bound $\|\widetilde{\Sigma}_2\|_2$. Recall that $\widetilde{Y}=Y+\epsilon W$. Then
\[
\widetilde{Y}^T\widetilde{Y}=Y^TY+\epsilon W^T Y + \epsilon Y^TW + \epsilon^2 W^T W \notag.
\]
We apply Theorem \ref{thm:eigs} to $Y^TY$ and $\widetilde{Y}^T\widetilde{Y}$, to get for any $i=1,...,n$,
\begin{eqnarray*}
|\lambda_i^2-\tilde{\lambda}_i^2|&\leq& \|\epsilon W^TY+\epsilon Y^TW+\epsilon^2W^TW \|_2 \notag\\
&\leq&2\epsilon\|W\|_2\|Y\|_2+\epsilon^2\|W\|^2_2 \notag \\
 &\leq&6\epsilon \|\Sigma_1\|_2+9\epsilon^2, \ \ \ \ \ \ \text{with  probability} \ 1-e^{-n/2}.
\end{eqnarray*}
The last inequality made use of Theorem \ref{thm:condition}. \\ Hence for any $\widetilde{\lambda}_i \in \widetilde{\Sigma}_2$, we have
\[
|\widetilde{\lambda}_i^2-0|\leq(6\epsilon \|\Sigma_1\|_2+9\epsilon^2),
\]
and thus
\begin{equation}\label{eq:0}
\|\widetilde{\Sigma}_2\|_2\leq \sqrt{6\epsilon \|\Sigma_1\|_2+9\epsilon^2}.
\end{equation}
It is easy to verify that the $\delta$ defined in \eqref{eq:delta} obeys
\begin{equation}\label{eq:delta1}
\delta\geq \sigma_{\min}(\widehat{\Sigma}_1)-\|\widetilde{\Sigma}_2\|_2\geq \frac{2}{3}(\sigma_{\min}(\Sigma_1)-\epsilon(\alpha_2-\alpha_1)-2\epsilon^2\|\Sigma_1^{-1}\|_2\alpha_1^2)-\sqrt{6\epsilon \|\Sigma_1\|_2+9\epsilon^2}\equiv \delta_1,
\end{equation}
where second inequality follow from \eqref{eq:sigmahat} and \eqref{eq:0}.
 
Whenever $\delta_1>0$, we can apply Theorem \ref{thm:sta} to the two SVDs in \eqref{eq:tilde} and \eqref{eq:I}, to obtain
\begin{equation}\label{eq:lemma1}
\min\limits_{L \ unitary}\|Q_{(UP)_1}M_1L-\widetilde{U}_{1}\|_{F} \leq \sqrt 2 \frac{\sqrt{\|E^T\widetilde{U}_1\|_{F}^{2}+\|E\widetilde{V}_1\|^{2}_{F}}}{\delta_	1}.
\end{equation}
The matrix $E$, defined in \eqref{eq:diag} and modified in \eqref{eq:dec} and \eqref{eq:I}, is essentially a sum of several products of Gaussian matrices. Using Lemma \ref{lm:1} and Lemma \ref{lm:3}, we obtain (see Proposition 2 below) the following bound:
\[\|E\|_2\leq 2 E_3(\epsilon,\Sigma_1,k,n,\gamma),\]
holds with probability over $1-4e^{-(n-k)\gamma^2/2}$
Here,
\begin{eqnarray}\label{eq:E3} E_3(\epsilon,\Sigma,k,n,\gamma) &= &\epsilon^3 \|\Sigma_1^{-1}\|_2^2(\alpha_1^2\alpha_3+2\alpha_1^2\alpha_2+2\alpha_1\alpha_2\alpha_3)+\epsilon^4 \|\Sigma_1^{-1}\|_2^3(\alpha_1^2\alpha_2^2+2\alpha_1^2\alpha_2\alpha_3)\notag \\ &+&\epsilon^5 \|\Sigma_1^{-1}\|_2^4\alpha_1^2\alpha_2^2\alpha_3 ,\end{eqnarray}
where $\alpha_1-\alpha_3$ are the same as those defined in Theorem \ref{thm:main}. \\
The right hand side of \eqref{eq:lemma1} therefore has the following bound:
\begin{equation}\label{eq:num}
\sqrt{\|E^T\widetilde{U}_1\|_{F}^{2}+\|E\widetilde{V}_1\|^{2}_{F}}\leq \sqrt{2k}\|E\|_2\leq 2\sqrt{2k}E_3.
\end{equation}
 We are now ready to define the rotation $M$ which first appears in \eqref{eq:decom1}. Let\begin{equation}\label{eq:M}
M:=(M_1\widehat{L})^{-1},
\end{equation}
where $\widehat{L}$ is the minimizer of \eqref{eq:lemma1} and by Theorem 1, $\widehat{L}=Z_1Z_2^T$, with $Z_1SZ_2^T$ being any SVD of $\widehat{U}_1^T\widetilde{U}_1+\widehat{V}_1^T\widetilde{V}_1$. Plugging \eqref{eq:num} and \eqref{eq:M} into \eqref{eq:lemma1} ,we obtain:
 \[
 \|\widetilde{U}_1M-Q_{(UP)_1}\|_{\max}\leq \|\widetilde{U}_1M-Q_{(UP)_1}\|_F= \|\widetilde{U}_1-Q_{(UP)_1}M^{-1}\|_F =  \|\widetilde{U}_1-Q_{(UP)_1}M_1\widehat{L}\|_F\leq \frac{4\sqrt{k}E_3}{\delta_1}.
 \]

\subsection{ESTIMATING III}
We start with breaking $III$ into two parts:
\begin{eqnarray*}
III&=&\epsilon^2\|U_2B\|_{\max}\notag\\&\leq& \epsilon^2(\|U_2C_{22}C_{12}^T\Sigma_{1}^{-2}\|_{\max}+\|U_2C_{21}\Sigma_{1}^{-1}C_{11}\Sigma_1^{-1}\|_{\max})\notag\\&=&\epsilon^2(IV + V).
\end{eqnarray*}
We estimate IV and V separately.
\begin{eqnarray*}\label{eq:IV}
IV&=&\left\|\left(U_1,U_2\right)\left(\begin{matrix}C_{21}^T \\ C_{22}\end{matrix}\right)C_{12}^T\Sigma_1^{-2}-U_1C_{21}^TC_{12}^T\Sigma_1^{-2}\right\|_{\max} \notag\\
&\leq& \|\Sigma_1^{-1}\|_2^2\left(\left\|\left(U_1,U_2\right)\left(\begin{matrix}C_{21}^T \\ C_{22}\end{matrix}\right)C_{12}^T\right\|_{\max}+\|U_1C_{21}^TC_{12}^T\|_{\max} \right)\notag\\
&\leq& \|\Sigma_1^{-1}\|_2^2\left(\left\|\left(U_1,U_2\right)\left(\begin{matrix}C_{21}^T \\ C_{22}\end{matrix}\right)C_{12}^T\right\|_{\max}+k\|U_1\|_{\max}\|C_{21}^TC_{12}^T\|_{\max}\right).
\end{eqnarray*}
Observe that the entries of $(U_1,U_2)\left(\begin{matrix}C_{21}^T \\ C_{22}\end{matrix}\right)$ are i.i.d. $N(0,1/n)$ and are independent of those in $C_{12}^T$. Therefore we can apply Lemma 2 to each entry $\left(U_1,U_2\right)\left(\begin{matrix}C_{21}^T \\ C_{22}\end{matrix}\right)C_{12}^T$ and those of $C_{21}^TC_{12}^T$ to get, with probability at least $1-2e^{-(n-k)^{\beta}+\ln(k(n+k))}$, with $0<\beta<\frac{1}{2}$,
\[
\left\|\left(U_1,U_2\right)\left(\begin{matrix}C_{21}^T \\ C_{22}\end{matrix}\right)C_{12}^T\right\|_{\max}\leq (n-k)^{-\frac{1}{2}+\beta}, \ \ \  \|C_{21}^TC_{12}^T\|_{\max}\leq (n-k)^{-\frac{1}{2}+\beta}.
\]
Therefore,
\[
IV \leq (1+k)(n-k)^{-\frac{1}{2}+\beta}\|\Sigma_1^{-1}\|_2^2.
\]
For $V$, we first observe the following upper bound,
\begin{equation}\label{eq:V} V\leq k\|\Sigma_1^{-1}\|^2_2\|U_2C_{21}\|_{\max}\|C_{11}\|_{\max}.\end{equation}
Using \eqref{eq:V}, the union bound, as well as the following inequality,
\[\frac{2}{\sqrt{\pi}}\int\limits_{x}^{\infty}e^{-t^2}dt\leq \frac {e^{-x^2}}{\sqrt{\pi}x}.
\]
We can estimate the probability that $V$ exceeds the value $2kn^{-1+\beta}\|\Sigma_1^{-1}\|_2^2$,
\begin{eqnarray}&&P(V\geq 2kn^{-1+\beta}\|\Sigma_1^{-1}\|_2^2) \notag\\
&\leq& P(k\|\Sigma_1^{-1}\|^2_2\|U_2C_{21}\|_{\max}\|C_{11}\|_{\max}\geq 2kn^{-1+\beta}\|\Sigma_1^{-1}\|_2^2) \notag \\
&=&P(\|U_2C_{21}\|_{\max}\|C_{11}\|_{\max}\geq 2n^{-1+\beta})\notag\\ &\leq& P(\|(U_2C_{21})\|_{\max}\geq \sqrt{2}n^{\frac{-1+\beta}{2}})+P(\|C_{11}\|_{\max}\geq \sqrt{2}n^{\frac{-1+\beta}{2}}) \notag\\ &\leq&knP((U_2C_{21})(1,1)\geq \sqrt{2}n^{\frac{-1+\beta}{2}})+k^2P(C_{11}(1,1)\geq \sqrt{2}n^{\frac{-1+\beta}{2}})\notag \\
&<&k(n+k)\sqrt{\frac{2}{\pi}}\int\limits_{\sqrt{2}n^{\beta/2}}^{\infty}e^{-\frac{t^2}{2}} dt \notag \\ &\leq& e^{-n^{\beta}+\ln k(n+k)},\notag
\end{eqnarray}
where $((U_2C_{21})(1,1)$ and $C_{11}(1,1)$ denotes the first elements of $U_2C_{21}$ and $C_{11}$, respectively.
Therefore, with probability exceeding $1-e^{-n^{\beta}++\ln k(n+k)}$, we have
\begin{eqnarray*}
V &\leq& 2kn^{-1+\beta}\|\Sigma_1^{-1}\|_2^2.
\end{eqnarray*}
We combine the estimates of $IV$ and $V$ to get, with probability greater than $1-3e^{-(n-k)^{\beta}+\ln(k(n+k))}$,
\begin{equation}\label{eq:E4}
III \leq \epsilon^2(1+k)(1+\frac{2}{n^{1/2}})(n-k)^{-\frac{1}{2}+\beta}\|\Sigma_1^{-1}\|_2^2  \equiv E_4.
\end{equation}
We now aggregate the estimates of $I$, $II$ and $III$ and add up all the probabilities of failure to get \eqref{eq:thm}.
\end{proof}
\begin{Proposition}
Let $U$, $P$, $\Sigma_1$, $n$, $k$ and $\epsilon$ be the same as defined the proof of the Theorem \ref{thm:main}. Then for any $\gamma>0$, it holds with probability exceeding $1-4e^{-(n-k)\gamma^2/2}$ that
\[ \|(UP)_1^T(UP)_1-I\|_2\leq E_1,\]
where $E_1=\epsilon^2\|\Sigma_1^{-1}\|_2^2\alpha_1^2+2\epsilon^3\|\Sigma_1^{-1}\|_2^3\alpha_1^2\alpha_2+\epsilon^4\|\Sigma_1^{-1}\|^4_2\alpha_1^2\alpha_2^2,$ with $\alpha_1=1+\gamma+\sqrt{\frac{k}{n}}$, $\alpha_2=3+2\gamma+2\sqrt{\frac{k}{n}}$, and $\alpha_3=2+\gamma$.
\end{Proposition}
\begin{proof}

Let
\begin{eqnarray*}
F&=&\left(\begin{matrix} 0 & -\Sigma_1^{-1}C_{21}^{T} \\ C_{21}\Sigma_1^{-1} & 0 \end{matrix}\right),\\ G&=&\left(\begin{matrix} 0 & B^T \\ -B & 0 \end{matrix}\right),\\ H&=&\left(\begin{matrix} 0 & -\Sigma_1^{-1}C_{12} \\ C_{12}^T\Sigma_1^{-1} & 0 \end{matrix}\right), \\J&=&\left(\begin{matrix} 0 & D \\ -D^T & 0 \end{matrix}\right).
\end{eqnarray*}
By the definition of $P$ and direct calculations, we get
\begin{equation}\label{eq:expan}
(UP)^T(UP)-I=P^TP-I=\epsilon^2 F^TF+\epsilon^3 (F^TG+G^TF)+\epsilon^4 G^TG.
\end{equation}
Applying Lemma \ref{lm:3} to $C$,  we obtain that with probability exceeding $1-4e^{-(n-k)\gamma^2/2}$,
\[
\|C_{12}\|_2,\|C_{21}\|_2\leq \alpha_1, \ \ \ \|C_{11}\|_2+\|C_{12}\|_2\leq \alpha_2, \ \ \ \|C\|_2, \|C_{22}\|_2\leq \alpha_3.
\]
When these bounds holds, we can get a bound on the matrix $B$ as follows
\begin{eqnarray*}
\|B\|&=&\|C_{22}C_{12}^T\Sigma_1^{-2}+C_{21}\Sigma_1^{-1}C_{11}\Sigma_1^{-1}\|_2\\
&\leq& (\|C_{22}\|_2\|C_{12}\|_2+\|C_{21}\|_2\|C_{11}\|_2)\|\Sigma_1^{-1}\|_2^2\\
&\leq & \alpha_1\alpha_2\|\Sigma_1^{-1}\|_2^2.
\end{eqnarray*}
Similarly, we have
\begin{equation}\label{eq:ineq}\|D\|\leq \alpha_1\alpha_2\|\Sigma_1^{-1}\|_2^2,\ \ \|G\|_2,\|J\|_2\leq \alpha_1\alpha_2\|\Sigma_1^{-1}\|_2^2, \ \ \ \|H\|_2,\|F\|_2\leq \alpha_1\|\Sigma_1^{-1}\|_2.\end{equation}
Inserting these inequalities into \eqref{eq:expan} finishes the proof.
\end{proof}
\begin{Proposition}
Let $U$, $P$, $\Sigma_1$, $n$, $k$ and $\epsilon$ be the same as defined the proof of the Theorem 5. In addition, assume that $\epsilon$ is small enough such that the $E_1$ defined in Proposition 1 satisfies $E_1\leq \frac{1}{2\sqrt{k}}$. Then for any $\gamma>0$, it holds with probability exceeding $1-4e^{-(n-k)\gamma^2/2}$ that
\[ \|E\|_2\leq 2 E_3,\]
where
\begin{eqnarray*} E_3 &= &\epsilon^3 \|\Sigma_1^{-1}\|_2^2(\alpha_1^2\alpha_3+2\alpha_1^2\alpha_2+2\alpha_1\alpha_2\alpha_3)+\epsilon^4 \|\Sigma_1^{-1}\|_2^3(\alpha_1^2\alpha_2^2+2\alpha_1^2\alpha_2\alpha_3)\notag \\ &+&\epsilon^5 \|\Sigma_1^{-1}\|_2^4\alpha_1^2\alpha_2^2\alpha_3 ,\end{eqnarray*}  with $\alpha_1$-$\alpha_3$ being the same as in Proposition 1.
\end{Proposition}
\begin{proof}
The expression of the error matrix $E$ when it was first defined in \eqref{eq:diag} is
\begin{eqnarray}\label{eq:expa}
E&=&\epsilon^3(CJ+G^TC+F^TCH+F^TJ+G^TH)+\epsilon^4(G^TJ+F^TCJ+G^TCH)\notag\\&+&\epsilon^5G^TCJ.
\end{eqnarray}

Using \eqref{eq:ineq} on \eqref{eq:expa} to derive
\[\|E\|_2\leq E_3.\]
Recall that $E$ is changed in \eqref{eq:dec} by multiplying $\left( \begin{matrix}
R_{(UP)_1}^{-T} & 0 \\ 0 & R_{(UP)_2}^{-T}
\end{matrix}\right)$ on the left and $\left( \begin{matrix}
R_{(VO)_1}^{-1} & 0 \\ 0 & R_{(VO)_2}^{-1}
\end{matrix}\right)$ on the right. Due to \eqref{eq:inverse}
\[
\|R_{(UP)_1}^{-1}\|_2=\frac{1}{\sigma_{min}((UP)_1)}<\sqrt{2},
\]
so are $R_{(UP)_2}^{-1}$ and $R_{VO)_i}^{-1}$ with $i=1,2$. Therefore after the change in \eqref{eq:dec},
\[
\|E\|_2\leq 2 E_3.
\]
At last, the change of $E$ made in \eqref{eq:I} does not change the value of $\|E\|_2$. Here completes the proof.
\end{proof}
\begin{remark}\label{rmk:4}
By including higher order adjudgements of $\epsilon$ ($\epsilon^l, l=3,4,...$) into the definition of $P$ and $O$ in \eqref{eq:OP}, we can make the error term $E_3$ smaller, i.e., $E_3= O(\epsilon^l)(l=4,5,...)$ , while keeping the order of $E_1,E_2, E_4$ and $\delta_1$ unchanged. As a consequence, a better estimate will be derived. However, in doing so, the calculation will be much more complicated, which is why we choose to present the near optimal result with a simpler proof.  
\end{remark}

\section{Application}
In this section, we show how can Theorem \ref{thm:main} be used on the M-PSK (Phase Shift Keying) classification problem.
\subsection{The MPSK classification problem}
PSK is a modulation scheme which uses the phases of sinusoids to encode digital data. If the total number of phases in use is $M$, the modulation is called M-PSK. Special names are given to the two most popular PSK modulation types: the 2PSK and the 4PSK, they are often called BPSK and QPSK, respectively. Other useful PSK types include 8PSK, 16PSK and 32PSK.

The MPSK classification problem is to determine the number of phases in an incoming M-PSK signal.

Figure \ref{fig:BPSK} plots a BPSK signal, where the phase of the sinusoid is reassigned in every two seconds. The continuous parts between every two consecutive reassignments are called symbols, and the two seconds is called symbol period or symbol duration. BPSK has two phases of choice, $0$ and $\pi$, so it has two type of symbols $\cos t$ and $-\cos t$.(similarly an M-PSK modulation has $M$ symbols of choice). If we use the phase $\pi$ to encode the binary value 1 and phase $0$ to encode 0, then the digital signal $00111101$ after modulation becomes the analog signal in Figure \ref{fig:BPSK}.  \\
The mathematical description of a noisy MPSK signal $s(t)$ is as follows:
\begin{equation}\label{eq:mpsk}
s(t)=\sum\limits_{n\in \mathbb{Z}} \chi_T(t-nT)\cos(2\pi f_c t+\theta_n+\theta_c)+w(t),
\end{equation}
where for any $a>0$, $\chi_a$ denotes the characteristic function of $[0,a]$. $T$ denotes the \emph{symbol period/duration}; The sinusoidal function $cosine$ is called the \emph{carrier} wave, its frequency $f_c$ is called the \emph{carrier frequency}, and its initial phase $\theta_c$ is called the\emph{carrier phase}. $w(t)$ represents a noise term, it is usually assumed to be additive white Gaussian with two sided power spectral density $N_0/2$, for some positive number $N_0$. The digital information is encoded in the phase parameters $\theta_n \in \Theta_M=\{\frac{2\pi i}{M}, i=0,1,...,M-1\}$, where $\theta_n$ denotes the phase of the $n$'th symbol. It is easy to see that if $M=2^m$, then each symbol can encode $m$ binary bits. We assume that all the parameters are fixed for the duration of the signal.

An important concept related to the PSK modulation is the so-called constellation diagram. It is a two-dimensional scattered plot of all the available phases in a modulation scheme. Specifically, the constellation diagram for MSPK is the graph of the points: $\left\{\left(\cos \frac{2k\pi}{M},\sin \frac{2k\pi}{M}\right),\ \  k=1,...,M\right\}$ (see Figure \ref{fig:constel} ). Since constellation diagrams are one to one to the modulation types, recovering the diagram is equivalent to classifying MPSK signals.
\subsection{Model setting and the proposed method}
Different applications may result in different model settings about which parameters are known or unknown in \eqref{eq:mpsk}. In particular, the easiest setting is to assume $M$ to be the only unknown parameter, and the hardest is the fully blind classification, which assumes none of the parameters is known. We refer the readers to \cite{Polydoros}, \cite{Soliman}, \cite{Swami} for some classical methods and settings.


For illustration purposes, we consider a simple, yet nontrivial, partially blind model assuming that $T$ and $N_0$ are known, $\theta_n$ is chosen uniformly from $\Theta_M$ (which is a common assumption on the MPSK signals), $f_c=k/T$ for some unknown integer $k$, and $M$, $\theta_c$ are unknown.

Suppose we sample $s(t)$ in \eqref{eq:mpsk} in the following way. We take $L$ uniform samples from each symbol for a period of $N$ symbols and store them as an $L\times N$ data matrix $Y$, whose element $Y(l,n)$ denotes the $l$th sample in the $n$th period, which hence has the expression:
\begin{eqnarray}
Y(l,n)&=&s\left((L(n-1)+l)\frac{T}{L}\right) \notag \\ &=&
\cos\left(2\pi f_c (L(n-1)+l)\frac{T}{L}+\theta_n+\theta_c\right)+W(l,n). \notag \\
&=& cos\left(2\pi f_c \frac{lT}{L}+\theta_n+\theta_c\right)+W(l,n),
\end{eqnarray}
where $W$ is a Gaussian noise matrix with i.i.d. entries. For the illustration purpose and WLOG, we assume the entries of $W$ obey $N(0,1)$ distribution.

When $W=0$, since there are only $M$ values for $\theta_n$, $n=1,...,N$, the columns of $Y$ also have $M$ patterns. From the dimension reduction point of view, columns of $Y$ are merely high dimensional representations of a zero dimensional manifold. When noise is added, these $M$ points become $M$ clusters. If we can determine the number of clusters by some clustering algorithm, then our classification problem is solved.

The complexity of nearly all well-known clustering methods, such as k-means and Mean Shift methods grow exponentially with dimensionality. Therefore, for large data sets, we propose to conduct dimension reductions before doing the clustering. Comparing to other methods (\cite{Polydoros}, \cite{Soliman}, \cite{Swami}) which primarily requires an additional carrier removal step, the dimension reduction based method has two advantages:
\begin{itemize}
\item  the sampling rate is allowed to be lower than the Nyquist rate of carrier wave.
\item  classification and detection are completed simultaneously.
\end{itemize}
Here, we choose SVD for dimension reduction due to the key observation that the 2 dimensional embedding of $Y$ obtained by SVD is a good approximation to the constellation diagram of the original signal $s(t)$ (see Figure \ref{fig:cluster}).
Once the approximated constellation diagram is plotted, clustering algorithms can be applied to find the exact number of clusters.
\subsection{Method validation}
Now we provide more details to legislate our method.
\begin{figure} \begin{center}
\includegraphics[width=90mm, height=70mm] {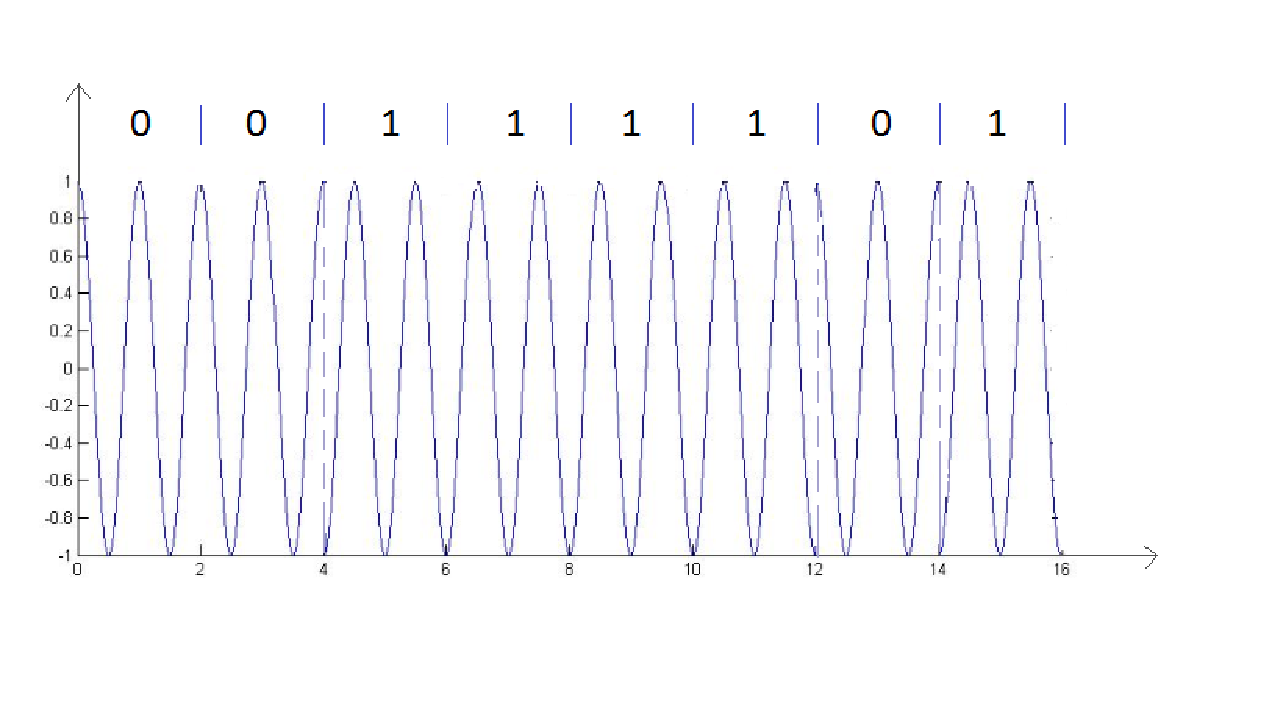}
\caption{Encoding using BPSK modulation}
\label{fig:BPSK}
\end{center}
\end{figure}
\begin{figure} \begin{center}
\includegraphics[width=100mm, height=70mm] {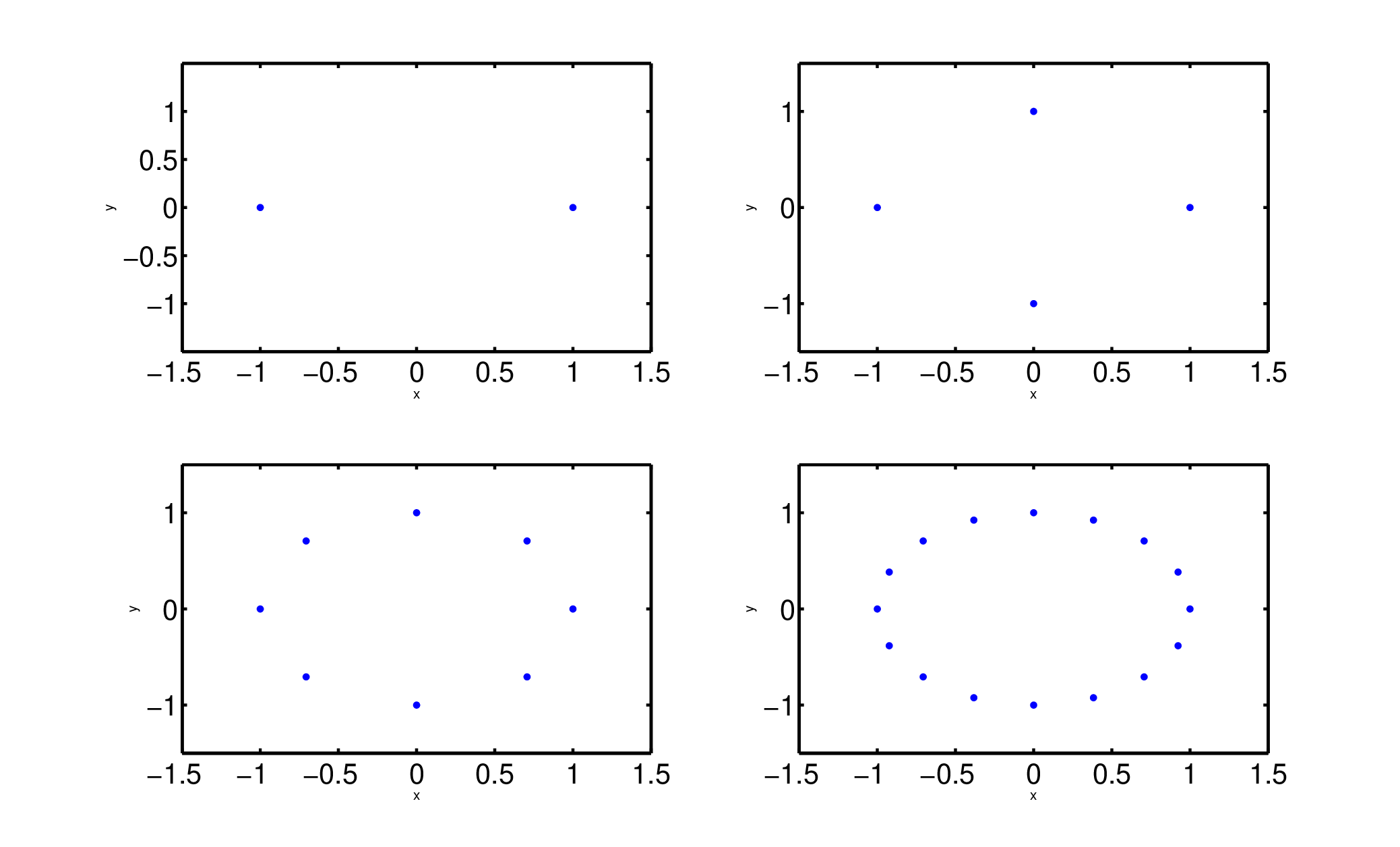}
\caption{Constellation of BPSK, QPSK, 8PSK and 16PSK modulations}
\label{fig:constel}
\end{center}
\end{figure}

By definition, $Y$ can be decomposed (not the usual SVD) as follows:
\begin{eqnarray*}
Y&=&U\Sigma V^T+W,
\end{eqnarray*}
where
\begin{eqnarray}\label{eq:decomposition}
U_{l,j}&=&\sqrt{\frac{2}{L}} cos(2\pi f_c\frac{lT}{L}+\theta_c-(j-1)\frac{\pi}{2}), \notag \\
V_{n,j}&=&\sqrt{\frac{2}{N}}cos(\theta_n+(j-1)\frac{\pi}{2}),\notag \\
\Sigma&=&\left(\begin{matrix}\frac{\sqrt{LN}}{2} &0 \\ 0 &\frac{\sqrt{LN}}{2}\end{matrix}\right),
\end{eqnarray}
for $l=1,..,L$, $j=1,2$, and $n=1,..,N$.

It can be shown that when $L \nmid f_cT$ ($L$ does not divide $f_cT$), $U$ and $V$ satisfy $U^TU=I$ and $V^TV\approx I$ with high probability. The orthogonality of $U$ can be immediately verified from its expression, but proving the nearly orthogonality of $V$ is more delicate. Roughly speaking, it follows from the uniformity assumption of $\theta_n$ and the central limit theorem. To avoid distraction from our main purpose, we put the rigorous proof in \cite{me}.

Since both $U$ and $V$ are (nearly) orthogonal, then $U\Sigma V^T$ is close to the actual SVD of $Y$ when $W=0$ or small. In other words, the right singular vectors of $Y$ are good approximations of $V$, whose columns are exactly the constellation points $(\cos(\theta_n),\sin(\theta_n))$. Figure \ref{fig:cluster} plots the right singular vector of an instance of $Y$, and it is indeed close to the constellation diagram.\\
\begin{figure} \begin{center}
\includegraphics[width=100mm, height=70mm] {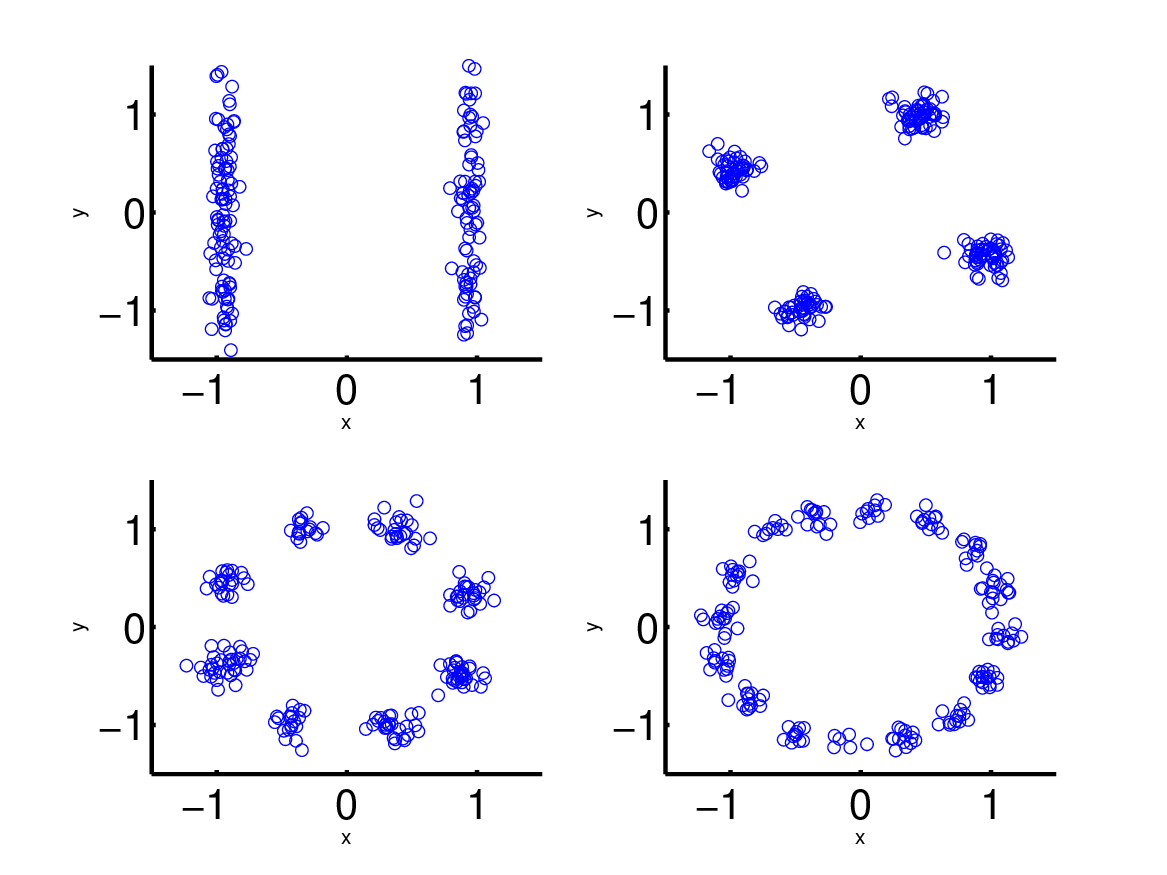}
\caption{Dimension reduction results for BPSK, QPSK, 8PSK, 16PSK modulations, SNR=14, numbers of samples=4600}
\label{fig:cluster}
\end{center}
\end{figure}
It might be easy for a human observer to tell how many clusters there are in Figure \ref{fig:cluster} without any prior knowledge, but most clustering algorithms require additional information such as  the number of clusters or the cluster radius as input. While both parameters are not easy to obtain in this model, many previous work suggest to do brute force on all the possible values of $M$ (that are 2, 4, 8, 16, 32, maybe also 64, 128), and compare the clustering results in some ad hoc way to decide which fits the data best.

Our theoretical result avoids the brute force by providing a way to approximate the cluster radius.

In order to be able to apply our result in Theorem \ref{thm:main}, we define $\widetilde{Y}$ by
\begin{equation}\label{eq:norm}
\widetilde{Y}\equiv 2Y/\sqrt{LN}=UV^T+\frac{2}{\sqrt{LN}}W=UV^T+\frac{2}{\sqrt{L}}\cdot \frac{1}{\sqrt{N}}W,
\end{equation}
so that the singular values of $\widetilde{Y}$ no longer change with the sample size.\\
In hardware implementations, larger values of $L$ are usually harder to realize than those of $N$, because $L$ corresponds to the sampling rate and $N$ to the sampling duration. Hence, in what follows, we assume that $L\leq N$. Padding zeros to $\widetilde{Y}$ to form an $N\times N$ matrix, Theorem \ref{thm:main} can then be applied. Since the factor $\frac{1}{\sqrt{N}}W$ in the last term of \eqref{eq:norm} is a normalized Gaussian matrix, the other factor, $\frac{2}{\sqrt{L}}$, then becomes the energy of noise, corresponding to the $\epsilon$ in Theorem \ref{thm:main} (i.e., $\epsilon=\frac{2}{\sqrt{L}}$). Only when $L\rightarrow \infty$, we have $\epsilon \rightarrow 0$. In other words, if we want the noise to go to 0, we must let the matrix size tend to infinity. Remark \ref{rmk:2} indicates that in order for the noise to stay Gaussian after SVD, not only do we need $\epsilon$ and $\frac{1}{\sqrt N}$ go to 0, but these two quantities must satisfy certain relationship.
Specifically, \eqref{eq:epsilon} of Remark \ref{rmk:2} implies that we must require $\epsilon$ to satisfy
\begin{equation}\label{eq:epsilon2} \epsilon =o\left( \min\left\{N^{-\beta}, N^{-1/4}, \frac{1}{\|V\|_{\max}N^{\frac{1}{2}}}\right\}\right). \end{equation}
In order to simplify \eqref{eq:epsilon2}, first observe that in this example, we have $\|V\|_{\max}=\sqrt{\frac{2}{N}}$ from \eqref{eq:decomposition}. Second, to maintain a fixed failure rate in Theorem \ref{thm:main}, say $1-\rho$ with $0<\rho<1$, it is sufficient to choose $\beta$ such that $(N-k)^{-\beta}\leq (\ln N(k+1)-\ln \rho/4)^{-1}$. These observations indicate that $N^{-1/4}$ is asymptotically the smallest term on the right hand side of \eqref{eq:epsilon2} . Hence \eqref{eq:epsilon2} is reduced to
\[
\epsilon=o( N^{-1/4} ).
\]
Since $\epsilon=\frac{2}{\sqrt{L}}$, the above relation requires $ N=o(L^2)$. This relation does not contradict with the previous assumption $L\leq N$, so the feasible region is non empty. By Remark 4, we know that the requirement on $L$ could be further relaxed to $L=O(1)$. 

Now for feasible $L$ and $N$, we can safely assume that the right singular vectors of $Y$ is a rotation of $V$ plus a random Gaussian noise. Because of the Gaussian, it is suitable to apply the Mean Shift (MS) clustering method with Gaussian kernel. The explicit form of the Gaussian matrix is given in \eqref{eq:thm}. From it, one can derive that the $95\%$ percentile of the 2D Gaussian noise on each data point is approximately $2.45\sqrt{2N_0(1-2/N)/(LN)}$. We set this number to be the radius of all clusters and feed it into the MS algorithm.

When the modulation type is BPSK, the rank of $Y$ is one, so the second dimension of its singular vectors is no longer reliable (see the up left graph of Figure \ref{fig:cluster}). Fortunately, this case can be easily detected by simply examining whether the second singular value of $Y$ is much smaller than the first one.

In our first experiment, we generate a QPSK signal with carrier frequency of 1GHZ, symbol rate 10MHZ, and damped by AWGN with $SNR=10$. We take 31 samples per symbol (much lower than the Nyquist rate of the carrier frequency, and satisfies $L\nmid f_cT$) and sample 200 symbols. In Figure \ref{fig:radius}, the two dimensional embedding of $Y$ obtained by SVD is plotted, together with a circle whose radius is the theoretical prediction $2.45\sqrt{2N_0(1-2/N)/(LN)}$. We can see that the predicted radius is very close to the real ones.
\begin{figure} \begin{center}
\includegraphics[width=100mm, height=70mm] {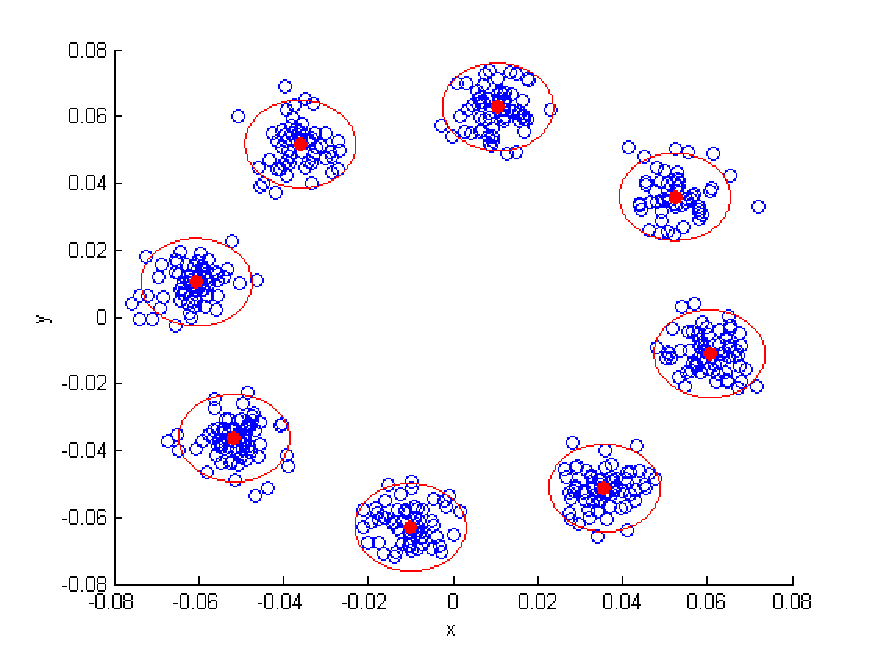}
\caption{Clustering result: Small circles denote the low dimension representation of the data points returned by PCA. The eight big circles denote the theoretically predicted cluster radius. }
\label{fig:radius}
\end{center}
\end{figure}

In our second experiment, we let the SNR decrease and examine the performance of the above algorithm. A classification is deemed as successful only when the number of clusters returned by the MS algorithm $\widehat{M}$ is strictly equal to the true $M$. The result is plotted in Figure \ref{fig:success}. As expected, when noises grow, the singular vector distributions deviate from Gaussian and the predicted radii become too small for the algorithm to find the correct $M$.

\begin{figure} \begin{center}
\includegraphics[width=100mm, height=70mm] {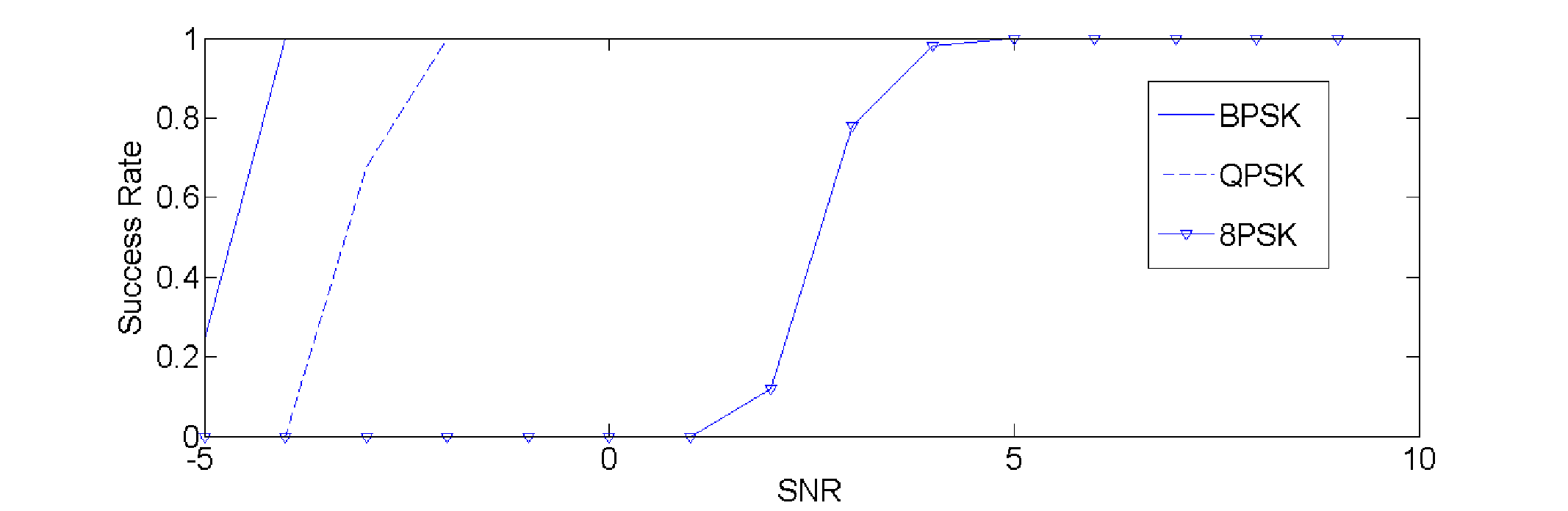}
\caption{The success rate for BPSK, QPSK, and 8PSK modulations with respect the SNR }
\label{fig:success}
\end{center}
\end{figure}

\section{CONCLUSION}
In this note, we provided a condition under which the perturbation of the principal singular vectors of a matrix under Gaussian noise has a near-Gaussian distribution. The condition is non asymptotic and is useful in application. We provided a simple example of audio signal classification problem to illustrate how our theorem can be used to make sampling strategy and to form new classification technique. More details about this new classification scheme is discussed in \cite{me}.
\section*{ACKNOWLEDGEMENT}
The author deeply grateful to Wojciech Czaja, Xuemei Chen, Ernie Esser, and Dane Taylor  for their generous help on this article. Research presented in this note was supported in part by Laboratory of Telecommunication Sciences. We gratefully acknowledge this support.

\end{document}